\let\OLDthebibliography\thebibliography
\renewcommand\thebibliography[1]{
  \OLDthebibliography{#1}
  \setlength{\parskip}{0pt}
  \setlength{\itemsep}{0pt plus 0.0ex}
}
\def\numberlikeadb{\global\def\theequation{\thesection.\arabic{equation}}}
\newtheorem{theorem}{Theorem}[section]
\newcommand{\eq}{\begin{equation}}
\newcommand{\qe}{\end{equation}}
\def\N{{\rm I\kern-0.16em N}}
\def\R{{\rm I\kern-0.16em R}}
\def\E{{\rm I\kern-0.16em E}}
\def\P{{\rm I\kern-0.16em P}}
\def\F{{\rm I\kern-0.16em F}}
\def\B{{\rm I\kern-0.16em B}}
\def\Z{{\rm I\kern-0.46em Z}}
\def\C{{\rm I\kern-0.46em C}}
\def\G{{\rm I\kern-0.50em G}}
\begin{document}

\title{Stein's Method for the Single Server Queue in Heavy Traffic}
\author{Robert E. Gaunt\footnote{School of Mathematics, The University of Manchester, Manchester M13 9PL, UK}\, and
Neil Walto$\mathrm{n}^*$  
}
\date{} 
\maketitle

\vspace{-12mm}

\begin{abstract}Following recent developments in the application of Stein's method in queueing theory, this paper is intended to be a short treatment showing how Stein's method can be developed and applied to the single server queue in heavy traffic.  Here we provide two approaches to this approximation: one based on equilibrium couplings and  another involving comparison of generators.
\end{abstract}

\noindent{{\bf{Keywords:}}} Stein's method; M/G/1 queue; G/G/1 queue; exponential approximation; heavy traffic; 
convergence rate

\noindent{{{\bf{AMS 2010 Subject Classification:}}} Primary 60K25; Secondary 90B20; 60F99

\section{Introduction}

The M/G/1 and G/G/1 are queueing models of a single server with infinite buffer experiencing  arrivals of independent identically distributed jobs. The study of these classical queueing models was initiated by the work of Erlang \cite{erlang1909theory}, Pollaczek \cite{pollaczek1930aufgabe,Pollaczek1957} and Khinechine \cite{khinchin2013mathematical}. The limiting asymptotic where the load on the server approaches a critical level is know as Heavy Traffic. In this asymptotic, the rescaled waiting time approaches an exponential random variable, and as first noted by Kingman \cite{kingman}, the exponential distribution provides an appropriate approximation for waiting time in these queueing models.

Stein's method, as introduced by Charles Stein \cite{stein}, is a well established method for ascertaining the accuracy of approximation between two probability distributions.
The method, as detailed in Stein \cite{stein2}, consists of three key steps: first, a characterising equation for the target distribution is established, which leads to the so-called \emph{Stein equation}; second, appropriate bounds must be found for the solution of the Stein equation; thirdly, through a combination of the first two key ingredients and coupling techniques the error between the prelimit and target distributions is bounded in a certain probability metric.  Over the years, a number of different approaches to distributional approximations for numerous target distributions have been established in the Stein's method literature; an overview can be found in the survey Ross \cite{ross}.  We summarise and apply two such approaches to the stationary single server queue.

%
%

Stein's method has found applicability in a number of areas such as random graphs \cite{bhj92}, branching processes \cite{pekoz1} and statistical mechanics \cite{el10}; see Ross \cite{ross} for a recent review of applications and methods.
However, only recently has Stein's method begun to be applied to queueing theory.
Specifically, following the work of Gurvich \cite{g14}, Braveman and Dai in a series of papers and another together with Feng ascertained and developed the application of Stein's method in queueing using a Basic Adjoint Relation (BAR) approach \cite{bj1,braverman18,bj2,bjf}. These works principally provide approximations between Erlang queueing models and their limiting stationary distributions in the Halfin-Whitt asymptotic. As noted above, another limiting regime is Heavy Traffic. Braverman, Dai and Miyazawa \cite{bjm} apply their BAR approach to prove weak convergence of stationary distributions in Heavy Traffic.  Recently, Besan\c{c}on, Decreusefond, and Moyal \cite{bdm18} have used Stein's method to obtain explicit bounds for the diffusion approximations for the number of customers in the M/M/1 and M/M/$\infty$ queues.  Their results, which are obtained using the functional Stein's method introduced
for the Brownian approximation of Poisson processes \cite{cd13}, differ from the aforementioned results as they are given at the process level.  
More recently, Huang and Gurvich \cite{hg18} provide moment bounds using for a range of queueing models with abondonment; moment bounds for the M/G/1 queue are a special case of their analysis. Theorem 1.1 below provides upper and lower-bounds in the Wasserstein metric, a metric commonly applied to compare probability distributions. Here we gain new results
on the convergence of M/G/1 and G/G/1 queues using and developing Stein's machinery.
Consequently, we devise an efficient proof where numerical constants can be specified and
a further tighter bound is given under an alternative heavy traffic scaling.


 From the famous Pollaczek-Khinchine formula for moment-generating functions or via ladder-height arguments, it can be shown that the stationary waiting time distribution of the M/G/1 and G/G/1 queues can be expressed as a geometric convolution, that is the sum of a geometrically distributed number of IID random variables. 
%

We review a number of works that consider exponential approximations or geometric convolutions.  
The work of Brown \cite{brown1990error} finds approximations of geometric convolutions to the exponential distribution using renewal theory techniques, rather than Stein's method.
More recent work of Brown \cite{brown2015sharp} improves upon these bounds under certain hazard rate assumptions.
Recent works of Pek\"oz and R\"ollin \cite{pekoz1} and Pek\"oz, R\"ollin and Ross \cite{prr} apply Stein's method to the exponential and geometric approximations respectively. Theorem \ref{thmross}, below, is analogous to Theorem 3.1 of \cite{pekoz1}. Contemporaneously with the work of Braverman and Dai, Daly \cite{daly} applies Stein's method to quantify the approximation between geometric convolutions and non-negative integer valued random variables.

Consider now the M/G/1 queue with inter-arrival times following the $\mathrm{Exp}(\lambda)$ distribution and a general service time distribution $S$.  We let $W$ denote its stationary waiting time and define $\rho= \lambda \mathbb E [S]$ to be its load.
It is a well-known result \cite{kingman} that the stationary waiting time of a M/G/1 queue is approximately exponentially distributed in the heavy traffic limit.  Specifically,
\[
(1-\rho ) W \Rightarrow \left( \frac{\mathbb E [S^2]}{2 \mathbb E [S]}\right) Z,
\quad 
\text{as $\rho \rightarrow 1$},
\]
where $Z$ is an exponential parameter $1$ random variable, $\Rightarrow $ denotes weak convergence, and here and throughout the paper we assume $S$ has finite third moment.
We quantify this approximation by providing bounds in the Wasserstein metric, which, for non-negative random variables $U$ and $V$, is defined to be
\begin{equation*}
d_{\mathrm{W}}(\mathcal{L}(U),\mathcal{L}(V))=\sup_{h\in\mathrm{Lip}(1)}|\mathbb{E}[h(U)]-\mathbb{E}[h(V)]|,
\end{equation*}
where $\mathrm{Lip}(1)=\{h:\mathbb{R}^+\rightarrow\mathbb{R}\,:\,|h(x)-h(y)|\leq|x-y|,\:\forall\, x,y\geq0\}$.  Letting $F$ and $G$ denote the distribution functions of $U$ and $V$ respectively, we have the equivalent defintion (see Gibbs and Su \cite{gs02}):
\begin{equation}\label{wass2}
d_{\mathrm{W}}(\mathcal{L}(U),\mathcal{L}(V))=\int_0^\infty|F(x)-G(x)|\,\mathrm{d}x.
\end{equation}

In the theorem below we provide approximations for two scalings of the waiting time of an M/G/1 queue.

\begin{theorem}\label{thmbound}
For the stationary M/G/1 described above, let 
\begin{equation*}
	\hat{W}=\frac{2\mathbb{E}[S]}{\mathbb{E}[S^2]}(1-\rho)W  
	\quad
	\text{and}
	\quad 
	\widetilde{W}=\frac{1}{\rho}\hat{W}.
\end{equation*}
Then,
\begin{equation}\label{mg1bound}
d_{\mathrm{W}}(\mathcal{L}(\widetilde{W}),\mathcal{L}(Z))\leq\frac{4\mathbb{E}[S^3]\mathbb{E}[S]}{3(\mathbb{E}[S^2])^2}\frac{1-\rho}{\rho},
\end{equation}
and
\begin{equation}\label{mg1bound2}1-\rho\leq d_{\mathrm{W}}(\mathcal{L}(\hat{W}),\mathcal{L}(Z))\leq \bigg(1+\frac{4\mathbb{E}[S^3]\mathbb{E}[S]}{3(\mathbb{E}[S^2])^2}\bigg)(1-\rho).
\end{equation}
The $O(1-\rho)$ rate as $\rho\rightarrow1$ in (\ref{mg1bound}) is optimal.
\end{theorem}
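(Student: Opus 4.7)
The plan is to combine the Pollaczek--Khinchine representation of $W$ as a geometric convolution with a Pek\"oz--R\"ollin type exponential approximation bound (Theorem \ref{thmross}), which for a non-negative random variable $V$ with $\mathbb E[V]=1$ coupled with its equilibrium distribution $V^e$ gives $d_{\mathrm{W}}(\mathcal L(V),\mathcal L(Z))\le 2\,\mathbb E|V^e-V|$.

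The starting point is $W\law\sum_{i=1}^N X_i$, where $N$ is geometric with $\mathbb P(N=n)=(1-\rho)\rho^n$ and the $X_i$ are i.i.d.\ copies of the stationary-excess variable $S^e$ of density $\mathbb P(S>x)/\mathbb E[S]$. A short Laplace-transform calculation on this geometric sum produces the key identity
\[
W^e\law W+X^e,
\]
in which $X^e$ has the equilibrium distribution of $X_1=S^e$ and is independent of $W$; this supplies the clean coupling $W^e=W+X^e$ with $|W^e-W|=X^e$. Setting $c=\frac{2\mathbb E[S]}{\mathbb E[S^2]}\frac{1-\rho}{\rho}$ so that $\widetilde W=cW$ has mean one and using $\widetilde W^e=\widetilde W+cX^e$, Theorem \ref{thmross} yields $d_{\mathrm{W}}(\mathcal L(\widetilde W),\mathcal L(Z))\le 2c\,\mathbb E[X^e]$. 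Since $\mathbb E[X^e]=\mathbb E[X_1^2]/(2\mathbb E[X_1])=\mathbb E[S^3]/(3\mathbb E[S^2])$ (using $\mathbb E[S^e]=\mathbb E[S^2]/(2\mathbb E[S])$ and $\mathbb E[(S^e)^2]=\mathbb E[S^3]/(3\mathbb E[S])$ by integration by parts), this produces exactly \eqref{mg1bound}.

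For \eqref{mg1bound2} I would use the triangle inequality together with $\hat W=\rho\widetilde W$, writing
\[
d_{\mathrm{W}}(\mathcal L(\hat W),\mathcal L(Z))\le \rho\,d_{\mathrm{W}}(\mathcal L(\widetilde W),\mathcal L(Z))+d_{\mathrm{W}}(\mathcal L(\rho Z),\mathcal L(Z)),
\]
where the second term equals $\int_0^\infty(e^{-x}-e^{-x/\rho})\,\mathrm dx=1-\rho$ by \eqref{wass2}, and the factor $\rho$ in the first term exactly cancels the $1/\rho$ in \eqref{mg1bound} to produce the stated upper bound. The matching lower bound is automatic: $h(x)=x$ lies in $\mathrm{Lip}(1)$, so $d_{\mathrm{W}}(\mathcal L(\hat W),\mathcal L(Z))\ge|\mathbb E[\hat W]-\mathbb E[Z]|=1-\rho$. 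Optimality of the $O(1-\rho)$ rate in \eqref{mg1bound} can be exhibited on the explicit M/M/1 case: there $\widetilde W$ has mass $1-\rho$ at the origin and is $\mathrm{Exp}(\rho)$ on $(0,\infty)$, and \eqref{wass2} then gives $d_{\mathrm{W}}(\mathcal L(\widetilde W),\mathcal L(Z))=2\rho^{\rho/(1-\rho)}(1-\rho)\sim 2e^{-1}(1-\rho)$ as $\rho\to 1$.

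The main obstacle is the equilibrium identity $W^e\law W+X^e$: this one-line generating-function fact is the engine of the whole argument, and without it the geometric-convolution structure would have to be unwound term by term. An alternative route replaces it by comparing the Stein operator $f\mapsto f'-f$ for $\mathrm{Exp}(1)$ with the generator of the (scaled) Lindley recursion; that approach avoids invoking Theorem \ref{thmross} but is more calculation-heavy, so the equilibrium-coupling strategy above is the one I would present first.
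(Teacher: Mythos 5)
Your proposal is correct and, for the two Wasserstein bounds, follows essentially the same route as the paper: represent $W$ as a geometric convolution of residual service times, exploit the equilibrium coupling $W^e \law W+X^e$ to get $d_{\mathrm{W}}\le 2\mathbb{E}|W^e-W|$, and then handle $\hat W$ by the identical triangle-inequality and $h(x)=x$ arguments. The paper simply quotes its Theorem \ref{thmross} with $p=1-\rho$, $\mu=\mathbb{E}[S^2]/(2\mathbb{E}[S])$, $\mu_2=\mathbb{E}[S^3]/(3\mathbb{E}[S])$, whereas you in effect re-prove that theorem inline; your Laplace-transform derivation of $W^e\law W+X^e$ is a legitimate substitute for the paper's discrete identity $\mathbb{P}(N=n)=p\,\mathbb{P}(N\ge n)$, and your moment computations check out. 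The one genuine divergence is the optimality claim. The paper proves it inside Theorem \ref{thmross} by a characteristic-function expansion with the test function $h(x)=\cos(tx)$, showing $\mathrm{Re}[\varphi_W(t)]-\tfrac{1}{1+t^2}$ has a term of exact order $p$; this yields optimality for \emph{every} service distribution with finite third moment. You instead exhibit the M/M/1 case explicitly, and your computation $d_{\mathrm{W}}(\mathcal{L}(\widetilde W),\mathcal{L}(Z))=2\rho^{\rho/(1-\rho)}(1-\rho)\sim 2e^{-1}(1-\rho)$ is correct. This establishes that the $O(1-\rho)$ rate in \eqref{mg1bound} cannot be improved as a general statement, which suffices for the theorem as phrased, but it is strictly weaker than the paper's conclusion that the rate is attained for each fixed $S$; if you intend the stronger reading you would need the characteristic-function (or some distribution-uniform) argument.
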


In the literature, the normalisation $\hat{W}$ of the stationary waiting time distribution is a more common heavy traffic scaling than the normalisation $\widetilde{W}$.  However, because the expectation of $\hat{W}$ is not equal to that of the $\mathrm{Exp}(1)$ distribution, we have a larger error in bound (\ref{mg1bound2}) than bound (\ref{mg1bound}).  In proving Theorem \ref{thmbound} we shall first establish the bound (\ref{mg1bound}) (in which $\widetilde{W}$ and $Z$ have the same mean) and then deduce (\ref{mg1bound2}) as a simple consequence.  Theorem \ref{thmbound} includes a lower bound and a statement regarding the optimality of the rate in the bounds.  Such results are not commonly found in the Stein's method literature and are therefore of interest, even if the method of proof is not new to this paper.

We provide two proofs of Theorem \ref{thmbound}.  Rather curiously, two quite different approaches result in exactly the same upper bound (\ref{mg1bound}). One proof analyses the generator of the M/G/1 queue and compares this to the Stein equation of the limiting exponential, as such this generator approach to Stein's method is similar to the BAR method used in \cite{braverman18,bj2,bj1,bjf}.
Prior works applying Stein's method to queueing have typically considered phase-type job size distributions. We note that the results found here hold for general job size distributions.
The other proof applies an equilibrium coupling approach. This is the first time that a coupling approach to Stein's method has been used in the context of queueing theory, and allows us to analyse the G/G/1 queue. To the best of our knowledge, general arrivals have not been proven; only Markovian results using comparison of generators.  In our proof, we note that both the M/G/1 and G/G/1 queue have a stationary distribution that is the convolution of a geometrically distributed number of IID random variables, and
we prove a variant of results in Pek\"oz and R\"ollin \cite{pekoz1} and Ross \cite{ross}. 
 In addition, we provide a new result following Gaunt \cite{gaunt vg2} which proves that the rate of convergence considered is optimal. From our results we can 
deduce the following bound for the G/G/1 queue. 
(More detail on the terms in the bound below will be provided in Section \ref{sec2}.)

\begin{theorem}\label{gg1thm}Let $W$ be the stationary waiting time distribution of the G/G/1 queue.  Let 
$$\widetilde{W}=\frac{1-\eta}{\eta\mathbb{E}[Y_1]}W.$$ 
Where $Y_1$ is the first ladder height and $\eta$ is the probability of a finite ladder epoch for the random walk determining the evolution of the G/G/1 queue.
Then,
\begin{equation}\label{gg1bound}d_{\mathrm{W}}(\mathcal{L}(\widetilde{W}),\mathcal{L}(Z))\leq \frac{\mathbb{E}[Y_1^2]}{\big(\mathbb{E}[Y_1]\big)^2}\frac{1-\eta}{\eta}.
\end{equation}
The $O(1-\eta)$ rate as $\eta\rightarrow1$ in (\ref{gg1bound}) is optimal.
\end{theorem}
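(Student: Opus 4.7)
The plan is to exploit the classical ladder-height representation of the G/G/1 stationary waiting time: $W \law \sum_{i=1}^{N} Y_i$, where $N$ is geometrically distributed on $\{0,1,2,\ldots\}$ with $\Prob(N=n) = (1-\eta)\eta^n$, the $Y_i$ are i.i.d.\ copies of the first (ascending) ladder height, and $N$ is independent of the $Y_i$'s. This geometric-convolution structure is precisely what underpins Kingman's heavy-traffic exponential limit. Since $\mathbb E[W] = \frac{\eta}{1-\eta}\mathbb E[Y_1]$, the normalisation $\widetilde W = \frac{1-\eta}{\eta \mathbb E[Y_1]}W$ is a non-negative random variable with the same mean as $Z \sim \mathrm{Exp}(1)$.

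With this structural fact in hand, the upper bound in Theorem \ref{gg1thm} reduces to the general exponential-approximation bound for scaled geometric convolutions of i.i.d.\ non-negative random variables established in Section \ref{sec2} as Theorem \ref{thmross} (our variant of Theorem~3.1 of Pek\"oz and R\"ollin \cite{pekoz1}). The proof of that general theorem proceeds via the equilibrium-coupling approach to Stein's method for the exponential: one solves the Stein equation $f'(x) - x f(x) = h(x) - \mathbb E h(Z)$ for $h \in \mathrm{Lip}(1)$, controls the first two derivatives of the solution, and then couples $\widetilde W$ to its equilibrium transform $\widetilde W^{\mathrm e}$ by replacing a single summand $Y_i$ in the geometric sum with an independent draw from the equilibrium distribution of $Y_1$ (the law with density proportional to $\Prob(Y_1>y)$). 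Using $\mathbb E[Y_1^{\mathrm e}] = \mathbb E[Y_1^2]/(2\mathbb E[Y_1])$ and the prefactor $(1-\eta)/\eta$ coming from the geometric index and the scaling then produces the right-hand side of \eqref{gg1bound} exactly.

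For the optimality of the $O(1-\eta)$ rate as $\eta \to 1$, the plan is to follow the technique of Gaunt \cite{gaunt vg2}. The natural starting point is a direct second-moment calculation: the independence of $N$ and $(Y_i)$ gives $\mathbb E[\widetilde W^2] = 2 + \frac{\mathbb E[Y_1^2]}{(\mathbb E[Y_1])^2}\cdot \frac{1-\eta}{\eta}$, so $\widetilde W$ and $Z$ already differ in second moment at precisely the claimed rate (and with precisely the claimed constant). A Lipschitz-$1$ truncation of the test function $x \mapsto x^2$, combined with the integral representation \eqref{wass2} of the Wasserstein metric and standard tail control for $\widetilde W$, converts this discrepancy into a matching Wasserstein lower bound of order $1-\eta$. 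The principal difficulty is expected to lie not in this lower bound but in the equilibrium-coupling step inside Theorem \ref{thmross}: one must construct the equilibrium transform of a geometric convolution of i.i.d.\ non-negative summands explicitly via the one-summand replacement, verify that this produces a valid version of $\widetilde W^{\mathrm e}$, and combine it with the Stein factors for $\mathrm{Exp}(1)$ to land exactly on $\mathbb E[Y_1^2]/(\mathbb E[Y_1])^2$ without any slack. Once Theorem \ref{thmross} is established in this sharp form, Theorem \ref{gg1thm} is an essentially immediate specialisation to the ladder-height data of the G/G/1 queue.
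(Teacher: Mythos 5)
Your upper bound follows exactly the paper's route: represent $W$ as the ladder-height geometric convolution $\sum_{j=1}^{\Lambda}Y_j$ with $\Lambda\sim\mathrm{Geo}^0(1-\eta)$ independent of the IID $Y_j$, note that the scaling $\frac{1-\eta}{\eta\mathbb{E}[Y_1]}$ is precisely the normalisation $\frac{p}{\mu(1-p)}$ of Theorem \ref{thmross} with $p=1-\eta$, $\mu=\mathbb{E}[Y_1]$, $\mu_2=\mathbb{E}[Y_1^2]$, and read off the bound. Two small inaccuracies in your description of the machinery behind Theorem \ref{thmross}: the Stein equation you write, $f'(x)-xf(x)=h(x)-\mathbb{E}h(Z)$, is the \emph{normal} Stein equation, not the exponential one (\ref{expsteineqn}); and the equilibrium coupling is built by \emph{appending} an extra summand $X_{N+1}^{e}$, i.e.\ $W^e=\frac{p}{1-p}\big(\sum_{i=1}^{N}X_i+X_{N+1}^e\big)$, not by replacing an existing summand — replacement would lead you to bound $\mathbb{E}|Y_1-Y_1^e|$ rather than $\mathbb{E}[X^e]=\mu_2/(2\mu)$ and would not land on the stated constant. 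Since you are invoking Theorem \ref{thmross} as a black box these slips are not fatal for the upper bound.

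The genuine gap is in your optimality argument. Your second-moment computation is correct: $\mathbb{E}[\widetilde{W}^2]-\mathbb{E}[Z^2]=\frac{\mathbb{E}[Y_1^2]}{(\mathbb{E}[Y_1])^2}\frac{1-\eta}{\eta}$. But $x\mapsto x^2$ is not Lipschitz, and the truncation you propose does not convert this discrepancy into a Wasserstein lower bound of the \emph{same} order. Writing $\mathbb{E}[X^2]=\int_0^\infty 2x\,\Prob(X>x)\,\mathrm{d}x$, the moment gap satisfies $\Delta\leq 2T\,d_{\mathrm{W}}(\mathcal{L}(\widetilde W),\mathcal{L}(Z))+\mathbb{E}[\widetilde{W}^2\mathbf{1}(\widetilde{W}>T)]+\mathbb{E}[Z^2\mathbf{1}(Z>T)]$; to make the tail terms $o(1-\eta)$ you must send $T\to\infty$ (at least like $\log\frac{1}{1-\eta}$ even under exponential tail control, which is not available under the standing assumption $\mathbb{E}[Y_1^2]<\infty$ alone), and you then only obtain $d_{\mathrm{W}}\gtrsim(1-\eta)/\log\frac{1}{1-\eta}$ — a logarithm short of what is claimed. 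The paper (following Gaunt \cite{gaunt vg2}, the very technique you cite) avoids this entirely by using the Lipschitz-$1$ test function $h(x)=\cos(tx)$: one computes $\varphi_W(t)=G_N(\varphi_{X_1}(\frac{pt}{\mu(1-p)}))$ exactly, expands to second order in $p$, and finds $\mathrm{Re}[\varphi_W(t)]-\frac{1}{1+t^2}=\frac{p\mu_2t^2(t^2-1)}{2\mu^2(1+t^2)^2}+O(p^2)$, which is a genuine $\Theta(p)$ discrepancy against a fixed Lipschitz test function. You should either reproduce that characteristic-function argument or simply cite the optimality assertion already contained in Theorem \ref{thmross}, as the paper does.
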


The bound of Theorem \ref{gg1thm} is in a sense less explicit than those of Theorem \ref{thmbound}, being given in terms of the more involved quantities $\eta$, $\mathbb{E}[Y_1]$ and $\mathbb{E}[Y_1^2]$.  Standard results for
the G/G/1 queue are stated in terms of $\eta$ and expectations involving $Y_1$; see, for example,
several results in Section 11.5 of Grimmett and Stirzaker \cite{gsbook}. Further, textbook of Feller
(Chapter XII Section 3) specifies the distribution of $Y_1$ using Wiener-Hopf Factorisation.
However, in some cases these quantities can be computed explicitly. This is the case for
the M/G/1 queue (see Section \ref{sec4} for details), and thus the bound (\ref{mg1bound}) of Theorem 1.1
can be obtained directly from the bound (\ref{gg1bound}) of Theorem \ref{gg1thm}.

As evidenced by, for example, Braverman and Dai \cite{bj1} it is possible to use Stein's method to obtain bounds in metrics other than the Wasserstein distance in distributional approximations that arise in queueing theory.  Also, Proposition 1.2 of Ross \cite{ross} can be used to immediately translate the Wasserstein distance bounds of Theorems \ref{thmbound} and \ref{gg1thm} into Kolmogorov distance bounds, although the resulting bounds have sub-optimal rate of convergence.  In this paper, we restrict our attention to the Wasserstein metric because it is very natural in the context of Stein's method and allows for a simple and clear exposition that would not be possible if working with the Kolmogorov metric.  Moreover, an accurate Kolmogorov distance bound can be readily obtained from a general result of Brown \cite{brown2015sharp} that concerns Kolmogorov error bounds for the exponential approximation of geometric convolutions.  The following bound is obtained from combining the final inequality on p$.$ 846 of \cite{brown2015sharp} and the representation (\ref{wrep1}) for the waiting time distribution of the G/G/1 queue in steady state: 
\begin{equation*}\label{brownk}
d_{\mathrm{K}}(\mathcal{L}(\widetilde{W}),\mathcal{L}(Z))\leq 1-\exp\bigg(-\frac{\mathbb{E}[Y_1^2]}{2\big(\mathbb{E}[Y_1]\big)^2}\frac{1-\eta}{\eta}\bigg)\leq \frac{\mathbb{E}[Y_1^2]}{2\big(\mathbb{E}[Y_1]\big)^2}\frac{1-\eta}{\eta}.
\end{equation*}
This matches the order we find for the Wasserstein metric via an incisive application of Stein's method.

The rest of this paper is organised as follows. In Section \ref{sec2}, we recall several classical results about M/G/1 and G/G/1 queues that shall be needed in the sequel.  In Section \ref{sec3.1}, we give an overview of Stein's method for exponential approximation.  In Section \ref{sec3.2}, we consider the equilibrium coupling approach and give a general Wasserstein distance bound for the exponential approximation of geometric convolutions.  In Sections \ref{sec4} and \ref{sec5}, respectively, we use the equilibrium coupling and comparison of generators approaches to prove Theorem \ref{thmbound}.

\section{Properties of the M/G/1 and G/G/1 queues}\label{sec2}

Here we collect together several know results about the M/G/1 and G/G/1 queues.
A thorough analysis of the M/G/1 queue can be found in 
Kleinrock \cite{Kleinrock}, Chapter 5. A ladder-process analysis of the G/G/1 queue can be found in Asmussen \cite{Asm}, Chapters VIII and X.

First consider the G/G/1 queue. 
Let $S_i$ be the service time of the $i$th customer and $X_i$ be the length of time between the $i$th and $(i+1)$th arrivals. Let $U_i=S_i-X_{i+1}$ and define
\begin{equation*}
\Sigma_0=0, \quad \Sigma_i=\sum_{j=1}^i U_j, \quad n\geq1.
\end{equation*}
It is well-known that, as a consequence of Lindley's recursion, the stationary waiting time of the G/G/1 queue is given by
\[
W= \max_{i\in \mathbb Z_+} \left\{ \Sigma_i \right\} .
\]

Define an increasing sequence $L(0),L(1),\ldots$ of random variables by
\begin{equation*}L(0)=0,\quad L(n+1)=\min\big\{ i >L(n)\,:\,\Sigma_i>\Sigma_{L(n)}\big\};
\end{equation*}
that is, $L(n+1)$ is the earliest epoch $i$ of time at which $\Sigma_i$ exceeds the random walk's previous maximum $\Sigma_{L(n)}$.  The $L(n)$ are called ladder times. Here 
\[
\eta=\mathbb{P}(\Sigma_n>0\;\text{for some $n\geq1$})
\]
 is the probability that at least one ladder point exists.  The total number $\Lambda$ of ladder points follows the $\mathrm{Geo}^0(1-\eta)$ distribution with probability mass function $\mathbb{P}(\Lambda=n)=(1-\eta)\eta^{n}$, $n=0,1,2,\ldots$.  Let
\begin{equation*}Y_j=\Sigma_{L(j)}-\Sigma_{L(j-1)}
\end{equation*}
be the difference in the displacements of the walk at the $(j-1)th$ and $j$th ladder points.  Conditional on the value of $\Lambda$, $\{Y_j\,:\,1\leq j\leq\Lambda\}$ is a collection of IID random variables. 
Furthermore,
\begin{equation}\label{wrep1}W=\Sigma_{L(\Lambda)}=\sum_{j=1}^\Lambda Y_j.
\end{equation}
Thus we note that the waiting time of a G/G/1 queue is the sum of a geometrically distributed number of IID random variables.

We now turn our attention to the M/G/1 queue. It can be seen that the infinitesimal generator of the waiting time process $\{W_t, \:t\geq0\}$ of a M/G/1 queue is given by 
\begin{equation*}
G_{W_t}g(y)=\lambda\int_0^\infty[g(y+s)-g(y)]\,\mathrm{d}F(s)-g'(y)\mathbf{1}(y>0),
\end{equation*}
where $F$ denotes the distribution function of $S$.  Here the integral term accounts for the jumps due to the arrival of work and the derivative term corresponds to the downward drift due to service. Let 
\begin{equation*}\delta=\frac{2\mathbb{E}[S]}{\mathbb{E}[S^2]}\frac{1-\rho}{\rho}.
\end{equation*}
Then rescaling $x=\delta y$ and substituting $g(x)=f(x/\delta)$ gives the following generator for the normalised waiting time process $\widetilde{W}_t=\delta W_t$:
\begin{equation}\label{wgen}G_{\widetilde{W}_t}f(x)=\lambda\int_0^\infty[f(x+\delta s)-f(x)]\,\mathrm{d}F(s)-\delta f'(x)\mathbf{1}(x>0).
\end{equation}
When $\widetilde{W}_t$ is stationary, we have that 
\begin{equation}\label{genegn9}
\mathbb{E}[G_{\widetilde{W}_t}f(\widetilde{W})]=0,
\end{equation}
for $f:\mathbb{R}^+\rightarrow\mathbb{R}$ a once continuously differentiable function.
This can be shown via a Fourier analysis applied to an integro-differential equation derived from \eqref{genegn9}. We refer the reader to Takacs \cite{Tak55}  and Section 5.12 of Kleinrock \cite{Kleinrock} for details.

\section{Stein's method for exponential approximation}\label{sec3}

In this section, we present results from Stein's method for exponential approximation that will be used to obtain Wasserstein distance bounds for the exponential approximation of the stationary waiting time distribution of the M/G/1 and G/G/1 queues.  Our treatment follows that of Pek\"oz and R\"ollin \cite{pekoz1} and Ross \cite{ross}; alternative approaches can be found in Chatterjee, Fulman and R\"ollin \cite{cfr11}. 

\subsection{The exponential Stein equation}\label{sec3.1}
Firstly, we briefly review the characterisation  
which can be found in Stein et al$.$ \cite{stein3}. This lies at the heart of Stein's method for exponential approximation.  The random variable $Z$ has the $\mathrm{Exp}(1)$ distribution if and only if
\begin{equation}\label{expchar} \mathbb{E}[f''(Z)-f'(Z)+f'(0)]=0
\end{equation}
for all functions $f:\mathbb{R}^+\rightarrow\mathbb{R}$ with Lipschitz derivative. (Usually, the characterisation is given in terms of $g=f'$; we shall see in Section \ref{sec5} why it is helpful to introduce an extra derivative.)  The characterising equation (\ref{expchar}) leads to the so-called \emph{Stein equation}:
\begin{equation}\label{expsteineqn}f_h''(x)-f_h'(x)+f_h'(0)=h(x)-\mathbb{E}[h(Z)],
\end{equation}
where $h:\mathbb{R}^+\rightarrow\mathbb{R}$ is a test function and $Z\sim\mathrm{Exp}(1)$.  The unique solution of (\ref{expsteineqn}) such that $f_h'(0)=0$ is given by
\begin{equation}\label{steinsoln}f_h'(x)=-\mathrm{e}^x\int_x^\infty\big(h(t)-\mathbb{E}[h(Z)]\big)\mathrm{e}^{-t}\,\mathrm{d}t.
\end{equation}
If $h$ is Lipschitz then the third derivative
 of $f_h$ satisfies the following bound (see \cite{pekoz1}, Lemma 4.1): 
\begin{align}\label{fbound1}\|f_h^{(3)}\|_\infty\leq 2\|h'\|_\infty.
\end{align} 

Now, evaluating both sides of (\ref{expsteineqn}) at a random variable of interest $W$ and taking expectations gives that
\begin{equation}\label{eqnh}|\mathbb{E}[h(W)]-\mathbb{E}[h(Z)]|=|\mathbb{E}[f_h''(W)-f_h'(W)]|.
\end{equation}
If, for example, we take the supremum of both sides of (\ref{eqnh}) over all functions $h$ from the class $\mathrm{Lip}(1)$, then bounding the quantity $d_{\mathrm{W}}(\mathcal{L}(W),\mathcal{L}(Z))$ reduces to bounding the right-hand side of (\ref{eqnh}) with the supremum taken over all $f_h$ for which $h\in\mathrm{Lip}(1)$.  In Section \ref{sec3.2}, we shall consider one approach to bounding the right-hand side of (\ref{eqnh}), which we shall make use of in Section \ref{sec4}.  Another is the comparison of generators approach that will be described and applied in Section \ref{sec5}. 
 



\subsection{The equilibrium coupling}\label{sec3.2}

We begin with a definition (see Pek\"oz and R\"ollin \cite{pekoz1}).  Let $W \geq 0$ be a random variable with $\mathbb{E}[W]<\infty$. We say that
$W^e$ has the \emph{equilibrium distribution with respect to $W$} if 
\begin{equation*}\mathbb{E}[f'(W)] - f'(0) = \mathbb{E}[W]\mathbb{E}[f''(W^e)] 
\end{equation*}
for all functions $f:\mathbb{R}^+\rightarrow\mathbb{R}$ with Lipschitz derivative.  For such random variables $W$, the equilibrium distribution exists and is given by $W^e=UW^s$, where $U\sim U(0,1)$ and $W^s$, the size bias distribution of $W$, are independent (see \cite{pekoz1}, Section 2.1.1). 
The size bias distribution of $W$ is given by $\mathrm{d}F_{W^s}(x)=x\mathrm{d}F_W(x)/\mathbb{E}[W]$.

Now suppose that $\mathbb{E}[W]=1$ and $\mathbb{E}[W^2]<\infty$.  If $W^e$ has the equilibrium distribution with respect to $W$, then 
\begin{align*}|\mathbb{E}[h(W)]-\mathbb{E}[h(Z)]|&=|\mathbb{E}[f_h''(W)-f_h'(W)]|\\
&=|\mathbb{E}[f_h''(W)-f_h''(W^e)]|\leq \|f_h^{(3)}\|_\infty\mathbb{E}|W-W^e|.
\end{align*} 
Applying (\ref{fbound1}) and then taking the supremum of both side over all functions $h$ from the class $\mathrm{Lip}(1)$ yields the following bound (see \cite{pekoz1}, Theorem 2.1):
\begin{equation}\label{ebound}d_{\mathrm{W}}(\mathcal{L}(W),\mathcal{L}(Z))\leq 2\mathbb{E}|W-W^e|.
\end{equation}

Recall from Section \ref{sec2} that the stationary waiting time distribution of the G/G/1 queue can be represented as a geometric convolution of the form $\sum_{j=1}^N Y_j$, where $N\sim\mathrm{Geo}^0(p)$ and the $Y_j$ are IID and independent of $N$.  In the following theorem, we use the bound (\ref{ebound}) to obtain a Wasserstein distance bound between such a geometric convolution normalised to have mean one and the $\mathrm{Exp}(1)$ distribution. 

 The proof of our bound follows very closely that of Theorem 3.1 of Pek\"oz and R\"ollin \cite{pekoz1} and Theorem 5.11 of Ross \cite{ross}.  Indeed, a special case of these theorems is to a geometric convolution where $N$ now follows the $\mathrm{Geo}(p)$ distribution  rather than the $\mathrm{Geo}^0(p)$ distribution (here the $\mathrm{Geo}(p)$ distribution has probability mass function $p(k)=p(1-p)^{k-1}$, $k=1,2,3,\ldots$, whilst the $\mathrm{Geo}^0(p)$ distribution has probability mass function $p(k)=p(1-p)^{k}$, $k=0,1,2,\ldots$).  However, it should be noted, that one cannot immediately translate the results of \cite{pekoz1} and \cite{ross} to the $\mathrm{Geo}^0(p)$ distribution; indeed, as observed by Pek\"oz, R\"ollin and Ross \cite{prr}, Stein's method for $\mathrm{Geo}^0(p)$ and $\mathrm{Geo}(p)$ approximation often has to be developed in parallel. 

We prove that the rate of convergence of our bound is optimal. This follows a recent argument used in the proof of Theorem 5.10 of Gaunt \cite{gaunt vg2}. As such this result was not found in \cite{pekoz1} and \cite{ross}; however, one can readily adapt our argument to show that the rates of convergence in the analogous results of \cite{pekoz1} and \cite{ross} are optimal. 


\begin{theorem}\label{thmross}Let $X_1,X_2,\ldots$ be IID random variables with $\mathbb{E}[X_1]=\mu$ and $\mathbb{E}[X_1^2]=\mu_2$.  Let $N\sim\mathrm{Geo}^0(p)$, and suppose that $N$ is independent of the $X_i$.  Set $W=\frac{p}{\mu(1-p)}\sum_{i=1}^NX_i$ and $Z\sim \mathrm{Exp}(1)$.  Then
\begin{equation}\label{rossbound}d_{\mathrm{W}}(\mathcal{L}(W),\mathcal{L}(Z))\leq \frac{p\mu_2}{(1-p)\mu^2}.
\end{equation}
Moreover, the $O(p)$ rate as $p\rightarrow0$ in (\ref{rossbound}) is optimal.
\end{theorem}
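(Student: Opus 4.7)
My plan is to apply the equilibrium coupling bound~(\ref{ebound}), namely $d_{\mathrm{W}}(\mathcal{L}(W),\mathcal{L}(Z))\le 2\mathbb{E}|W-W^e|$, and exhibit a coupling that achieves the right order on the right-hand side. Writing $c=p/(\mu(1-p))$ for the normalising constant, the crucial step is the distributional identity
\begin{equation*}
W^e \law W + (cX_1)^e,\qquad W\text{ and }(cX_1)^e\text{ independent.}
\end{equation*}
Granting this, the bound is immediate: $\mathbb{E}|W-W^e|=\mathbb{E}[(cX_1)^e]=c\mathbb{E}[X_1^2]/(2\mu)=p\mu_2/(2(1-p)\mu^2)$, and substitution into~(\ref{ebound}) yields (\ref{rossbound}).

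To establish the identity I would compute Laplace transforms. Setting $\phi(s)=\mathbb{E}[e^{-scX_1}]$, the compound-geometric representation of $W$ gives $\mathbb{E}[e^{-sW}]=p/(1-(1-p)\phi(s))$. Since $\mathbb{E}[W]=1$, the equilibrium density of $W$ equals $\bar{F}_W$, whose Laplace transform is $(1-\mathbb{E}[e^{-sW}])/s$. Elementary algebra rewrites this expression as $\mathbb{E}[e^{-sW}]\cdot(1-p)(1-\phi(s))/(ps)$, and the trailing factor is precisely the Laplace transform of $(cX_1)^e$ because $\mathbb{E}[cX_1]=p/(1-p)$. Laplace transform uniqueness, together with the factorisation, forces $W^e$ to be the convolution of $W$ with an independent copy of $(cX_1)^e$.

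For optimality of the $O(p)$ rate, I would analyse a concrete family. Taking $X_i\sim\mathrm{Exp}(1/\mu)$, the Laplace transform above simplifies to exhibit $W$ as a two-component mixture with an atom of mass $p$ at $0$ and an $\mathrm{Exp}(1-p)$ distribution carrying the remaining mass $1-p$. Using the CDF characterisation~(\ref{wass2}), a short direct integration gives
\begin{equation*}
d_{\mathrm{W}}(\mathcal{L}(W),\mathcal{L}(Z)) = 2\bigl(e^{-(1-p)x_0}-e^{-x_0}\bigr),\qquad x_0=-p^{-1}\log(1-p),
\end{equation*}
which is asymptotic to $2p/e$ as $p\to 0$. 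This matches the upper bound up to a constant and shows the rate is sharp.

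The main obstacle is the coupling identity of the first paragraph. It is analogous in spirit to results in~\cite{pekoz1,ross} for the $\mathrm{Geo}(p)$ case, but the different size-biasing of $\mathrm{Geo}^0(p)$ (where $N^s\law N_1+N_2+1$ with $N_1,N_2$ i.i.d.\ $\mathrm{Geo}^0(p)$) produces a coupling in which the extra summand is the equilibrium of a single scaled $X_1$ rather than a further geometric block, and the Laplace-transform verification is what makes this crisp.
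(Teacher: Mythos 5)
Your proposal is correct, and it splits into one part that mirrors the paper and one that genuinely departs from it. For the upper bound you use exactly the coupling of the paper's proof, namely (\ref{wewe}), i.e.\ $W^e=W+\tfrac{p}{\mu(1-p)}X_{N+1}^e$; the difference is only in the verification. The paper checks the defining relation $\mathbb{E}[f'(W)]-f'(0)=\mathbb{E}[W]\mathbb{E}[f''(W^e)]$ directly by conditioning on $N$ and using $\mathbb{P}(N=n)=p\,\mathbb{P}(N\geq n)$, whereas you verify the factorisation of the equilibrium law by Laplace transforms; your algebra is right (the trailing factor $(1-p)(1-\phi(s))/(ps)$ is indeed the transform of $(cX_1)^e$ since $\mathbb{E}[cX_1]=p/(1-p)$), and both routes give $\mathbb{E}|W-W^e|=\mathbb{E}[(cX_1)^e]=p\mu_2/(2(1-p)\mu^2)$ and hence (\ref{rossbound}) via (\ref{ebound}). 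The transform argument is crisp but leans on the compound-geometric structure; the paper's conditional argument is the more ``Stein-native'' one. The real divergence is the optimality claim. Your explicit example (exponential summands, for which $W$ is a mixture of an atom at $0$ with mass $p$ and an $\mathrm{Exp}(1-p)$ law, giving $d_{\mathrm{W}}=2(\mathrm{e}^{-(1-p)x_0}-\mathrm{e}^{-x_0})\sim 2p/\mathrm{e}$) is correct and shows the rate in (\ref{rossbound}) cannot be improved over the class of summand distributions. The paper proves a stronger statement: by testing against $h(x)=\cos(tx)\in\mathrm{Lip}(1)$ and expanding $\varphi_W$, it obtains a lower bound of exact order $p$ for \emph{every} fixed distribution of $X_1$ with finite second moment. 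That stronger, per-distribution form is what is actually invoked to conclude optimality of the $O(1-\rho)$ and $O(1-\eta)$ rates in Theorems \ref{thmbound} and \ref{gg1thm} for general service and ladder-height distributions, so if you intend your result to feed those applications you would need to upgrade your example to the generic characteristic-function lower bound.
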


\begin{proof}For ease of notation, we prove the result for the case $\mu=1$; the generalisation to general $\mu>0$ is clear.  We begin by proving that 
\begin{equation}\label{wewe}W^e=\frac{p}{1-p}\bigg(\sum_{i=1}^NX_i+X_{N+1}^e\bigg)
\end{equation}
is an equilibrium coupling of $W$.  Let $f$ have a Lipschitz derivative with $f'(0)=0$ and define $g(m)=f'\big(\frac{p}{1-p}\sum_{i=1}^mX_i\big)$.  Using independence and the defining relation of $X_m^e$ gives that
\begin{equation*}\mathbb{E}\bigg[f''\bigg(\frac{p}{1-p}\sum_{i=1}^NX_i+\frac{p}{1-p}X_{N+1}^e\bigg)\,\bigg|\,N\bigg]=\frac{1-p}{p}\mathbb{E}[g(N+1)-g(N)\,|\,N].
\end{equation*}
We can use the formula $\mathbb{P}(N=n)=p\mathbb{P}(N\geq n)$ to obtain
\begin{equation*}\frac{1-p}{p}\mathbb{E}[g(N+1)-g(N)\,|\,(X_i)_{i\geq1}]=\mathbb{E}[g(N)\,|\,(X_i)_{i\geq1}].
\end{equation*}
Therefore, $\mathbb{E}[f''(W^e)]=\mathbb{E}[g(N)]=\mathbb{E}[f'(W)]$, as required.

Now, on substituting (\ref{wewe}) into (\ref{ebound}), we obtain
\begin{align*}d_{\mathrm{W}}(\mathcal{L}(W),\mathcal{L}(Z))&\leq 2\mathbb{E}|W-W^e|=\frac{2p}{1-p}\mathbb{E}[X_{N+1}^e] \\
&=\frac{2p}{1-p}\mathbb{E}[\mathbb{E}[X_{N+1}^e\,|\,N]]=\frac{p}{1-p}\mathbb{E}[\mathbb{E}[X_{N+1}^2\,|\,N]]=\frac{p\mu_2}{1-p}.
\end{align*}

We now prove that the $O(p)$ rate in (\ref{rossbound}) is optimal.  Consider the test function $h(x)=\cos(tx)$, $|t|\leq1$, which is in the class $\mathrm{Lip}(1)$.  Firstly, we record that 
\[\mathbb{E}[\cos(tZ)]=\int_0^\infty \cos(tx)\mathrm{e}^{-x}\,\mathrm{d}x=\frac{1}{1+t^2}.\]  
We now consider the characteristic function $\varphi_W(t)=\mathbb{E}[\mathrm{e}^{\mathrm{i}tW}]$, and note the relation $\mathbb{E}[\cos(tW)]=\mathrm{Re}[\varphi_W(t)]$.  From the above, $d_{\mathrm{W}}(\mathcal{L}(W),\mathcal{L}(Z))\geq |\mathrm{Re}[\varphi_W(t)]-\frac{1}{1+t^2}|$.  Recall that the probability-generating function of $N\sim \mathrm{Geo}^0(p)$ is given by $G_N(s)=\frac{p}{1-(1-p)s}$, $s<-\log(1-p)$.  Then 
\begin{equation}\label{refds}\varphi_W(t)=G_N\big(\varphi_{X_1}(\tfrac{pt}{\mu(1-p)})\big)=\frac{p}{1-(1-p)\varphi_{X_1}(\frac{pt}{\mu(1-p)})}.
\end{equation}
Now, since $\mathbb{E}[X_1]=\mu$ and $\mathbb{E}[X_1^2]=\mu_2$, as $p\rightarrow0$,
\begin{equation}\label{refgh}\varphi_{X_1}\big(\tfrac{pt}{\mu(1-p)}\big)=1+\frac{\mathrm{i}pt}{1-p}-\frac{1}{2}\frac{p^2t^2}{(1-p)^2}\frac{\mu_2}{\mu^2} +O(p^3).
\end{equation}
Substituting (\ref{refgh}) into (\ref{refds}) and performing an asymptotic analysis using the formula $\frac{1}{1+z}=1-z+O(|z|^2)$, $|z|\rightarrow0$, gives that, as $p\rightarrow0$, 
\begin{align*}\varphi_W(t)&=\frac{1}{\displaystyle 1-\mathrm{i}t+\frac{\mu_2}{2\mu^2}\frac{p^2t^2}{1-p}+O(p^2)}=\frac{1}{\displaystyle 1-\mathrm{i}t+\frac{\mu_2}{2\mu^2}p^2t^2+O(p^2)} \\
&=\frac{1}{1-\mathrm{i}t}\bigg(1-\frac{\mu_2}{2\mu^2}\frac{p^2t^2}{1-\mathrm{i}t}\bigg)+O(p^2) \\
&=\frac{1+2\mathrm{i}t-t^2}{2\mu^2(1+t^2)^2}\bigg(2\mu^2-\frac{\mu_2}{2\mu^2}p^2t^2-2\mathrm{i}\mu^2t\bigg)+O(p^2).
\end{align*}
Therefore, on simplifying further and equating real parts, we have, as $p\rightarrow0$,
\[\mathrm{Re}[\varphi_W(t)]=\frac{1}{1+t^2}+\frac{p\mu_2t^2(t^2-1)}{2\mu^2(1+t^2)^2}+O(p^2),\] 
and so the  $O(p)$ rate cannot be improved.
\end{proof}

\section{Approximation of the waiting time distributions of M/G/1 and G/G/1 queues by the coupling approach}\label{sec4}

In this section, we apply Theorem \ref{thmross} to prove Theorems \ref{thmbound} and \ref{gg1thm}. 

\vspace{3mm}

\noindent{\bf{Proof of Theorem \ref{thmbound} via the coupling approach.}}  We establish an upper bound for $d_{\mathrm{W}}(\mathcal{L}(\widetilde{W}),\mathcal{L}(Z))$. Suppose that the queue is stationary.  The queue is empty with probability $1-\rho$.  Let $R_i$ be the residual service time of customer $i$.  Then, recall from Section \ref{sec2} that the random variable $W$ can be expressed as $W=\sum_{i=1}^N R_i,$ where $N\sim \mathrm{Geo}^0(1-\rho)$.  Since the $R_i$ are IID and independent of $N\sim \mathrm{Geo}^0(1-\rho)$, we are in the setting of Theorem \ref{thmross}.  Here, using standard formulas for the moments of $R_1$, we have
\[p=1-\rho, \quad
\mu=\mathbb{E}[R_1]=\frac{\mathbb{E}[S^2]}{2\mathbb{E}[S]}, \quad
\mu_2=\mathbb{E}[R_1^2]=\frac{\mathbb{E}[S^3]}{3\mathbb{E}[S]}.\]
Plugging these values into Theorem \ref{thmross} yields the desired bound:
\begin{equation}\label{comp}d_{\mathrm{W}}(\mathcal{L}(\widetilde{W}),\mathcal{L}(Z))\leq \frac{4\mathbb{E}[S^3]\mathbb{E}[S]}{3(\mathbb{E}[S^2])^2}\frac{1-\rho}{\rho}.
\end{equation}
The optimality of the $O(1-\rho)$ rate as $\rho\rightarrow1$ is guaranteed by Theorem \ref{thmross}.

We now deduce an upper bound on $d_{\mathrm{W}}(\mathcal{L}(\hat{W}),\mathcal{L}(Z))$ from (\ref{comp}).  Being a probability metric, the Wasserstein distance satisfies the triangle inequality, and so we have
\begin{align}d_{\mathrm{W}}(\mathcal{L}(\hat{W}), \mathcal{L}(Z))&\leq d_{\mathrm{W}}(\mathcal{L}(\hat{W}), \mathcal{L}(\rho Z))+ d_{\mathrm{W}}(\mathcal{L}(\rho Z), \mathcal{L}(Z))\nonumber \\
&= d_{\mathrm{W}}(\mathcal{L}(\rho \widetilde{W}), \mathcal{L}(\rho Z))+ d_{\mathrm{W}}(\mathcal{L}(\rho Z), \mathcal{L}(Z))\nonumber \\
\label{tri}&= \rho d_{\mathrm{W}}(\mathcal{L}( \widetilde{W}), \mathcal{L}( Z))+ d_{\mathrm{W}}(\mathcal{L}(\rho Z), \mathcal{L}(Z)).
\end{align}
We have already bounded $d_{\mathrm{W}}(\mathcal{L}( \widetilde{W}), \mathcal{L}( Z))$, so it suffices to compute $d_{\mathrm{W}}(\mathcal{L}(\rho Z), \mathcal{L}(Z))$.  Recalling the definition (\ref{wass2}) of Wasserstein distance, we have that
\begin{equation}\label{expwass}d_{\mathrm{W}}(\mathcal{L}(\rho Z), \mathcal{L}(Z))=\int_0^\infty\big[(1-\mathrm{e}^{-x/\rho})-(1-\mathrm{e}^{-x})\big]\,\mathrm{d}x=1-\rho.
\end{equation}
Substituting (\ref{comp}) and (\ref{expwass}) into (\ref{tri}) yields the upper bound in (\ref{mg1bound2}).

Finally, we establish the lower bound in (\ref{mg1bound2}).  Recall that $\mathbb{E}[\hat{W}]=\rho$.  Then, since $h(x)=x$ is in the class $\mathrm{Lip}(1)$, it follows that
\begin{equation*}d_{\mathrm{W}}(\mathcal{L}(\hat{W}),\mathcal{L}(Z))\geq |\mathbb{E}[\hat{W}]-\mathbb{E}[Z]|=1-\rho.
\end{equation*}
The proof is complete. \hfill $\Box$

\vspace{3mm}

\noindent {\bf{Proof of Theorem \ref{gg1thm}.} }
Recall from Section \ref{sec2} that the random variable $W$ can be represented as $W=\sum_{j=1}^\Lambda Y_j$, where $\Lambda\sim \mathrm{Geo}(1-\eta)$.  Since the $Y_j$ are IID and independent of $\Lambda$, we are in the setting of Theorem \ref{thmross}.  Here we have
\[p=1-\eta, \quad \mu=\mathbb{E}[Y_1]=\mathbb{E}[\Sigma_{L(1)}], \quad \mu_2=\mathbb{E}[Y_1^2]=\mathbb{E}[\Sigma_{L(1)}^2].\]
Plugging these values into Theorem \ref{thmross} yields the bound (\ref{gg1bound}).  The optimality of the $O(1-\eta)$ rate as $\eta\rightarrow1$ is guaranteed by Theorem \ref{thmross}.
\hfill $\Box$	

\section{Approximation of the waiting time distributions of the M/G/1 queue by the generator approach}\label{sec5}

In this section, we prove Theorem \ref{thmbound} using the comparison of generators approach to Stein's method.  This approach was used in a series of papers of Braverman, Dai and Feng \cite{braverman18, bj1, bj2, bjf} to derive diffusion approximations for the number of customers in various queueing systems in steady state.  However, the approach applies in many other settings; see, for example, Ley, Reinert and Swan \cite{ley} in which the approach is used to bound the distance between standard probability distributions with respect to a probability metric.

\vspace{3mm}


\noindent{\bf{Proof of Theorem \ref{thmbound} via comparison of generators.}} We establish inequality (\ref{mg1bound}); the double inequality (\ref{mg1bound2}) then follows from exactly the same argument as was used in the coupling approach proof of Section \ref{sec4}.  We do not prove the assertion that the $O(1-\rho)$ rate of convergence is optimal.

  Let us first recall that the generator of $\widetilde{W}_t$ is given by
\begin{equation}\label{wgen1}G_{\widetilde{W}_t}f(x)=\lambda\int_0^\infty[f(x+\delta s)-f(x)]\,\mathrm{d}F(s)-\delta f'(x)\mathbf{1}(x>0),
\end{equation}
where $\delta=\frac{2\mathbb{E}[S]}{\mathbb{E}[S^2]}\frac{1-\rho}{\rho}$.  Now, let $G_Zf(x)$ be the left-hand side of the Stein equation (\ref{expsteineqn}):
\begin{equation*}G_Zf(x):=f''(x)-f'(x)+f'(0).
\end{equation*}
Suppose $h$ is Lipschitz.  Then the solution $f_h$, as given by (\ref{steinsoln}), of the $\mathrm{Exp}(1)$ Stein equation (\ref{expsteineqn}) satisfies the assumptions such that equation (\ref{genegn9}) holds.  Therefore from (\ref{eqnh}) and the fact that $\mathbb{E}[G_{\widetilde{W}_t}f_h(\widetilde{W})]=0$, we see that, for any $a>0$,
\begin{align}|\mathbb{E}[h(\widetilde{W})]-\mathbb{E}[h(Z)]|&=|\mathbb{E}[G_Zf_h(\widetilde{W})]|\nonumber \\
&=|a\mathbb{E}[G_{\widetilde{W}_t}f_h(\widetilde{W})]-\mathbb{E}[G_Zf_h(\widetilde{W})]| \nonumber \\
\label{compgen}&\leq\mathbb{E}|aG_{\widetilde{W}_t}f_h(\widetilde{W})-G_Zf_h(\widetilde{W})|.
\end{align}
To bound the right hand-side of (\ref{compgen}), we study the difference $aG_{\widetilde{W}_t}f_h(x)-G_Zf_h(x)$, where we will later select $a=\frac{2}{\lambda\delta^2\mathbb{E}[S^2]}$.  For that we perform a Taylor expansion on $G_{\widetilde{W}_t}f_h(x)$.  To this end, we note that
\begin{align*}f_h(x+\delta s)-f_h(x)=\delta sf_h'(x)+\frac{1}{2}\delta^2s^2f_h''(x)+\frac{1}{6}s^3\delta^3 f_h^{(3)}(\eta),
\end{align*}
where $\eta\in(x,x+\delta s)$.  Substituting into (\ref{wgen1}) and using the solution to the Stein equation that satisfies $f_h'(0)=0$, \eqref{steinsoln}, gives 
\begin{align}G_{\widetilde{W}_t}f_h(x)&=\lambda\int_0^\infty[\delta sf_h'(x)+\frac{1}{2}\delta^2s^2f_h''(x)]\,\mathrm{d}F(s)-\delta f_h'(x)+\delta f_h'(x)\mathbf{1}(x=0)+R\nonumber \\
&=\lambda\delta\mathbb{E}[S]f_h'(x)+\frac{1}{2}\lambda\delta^2\mathbb{E}[S^2]f_h''(x)-\delta f_h'(x)+R\nonumber \\
\label{geneq5}&=\frac{1}{2}\lambda\delta^2\mathbb{E}[S^2]\big(f_h''(x)-f_h'(x)\big)+R,
\end{align}
where
\begin{align*}|R|&=\frac{\lambda\delta^3}{6}\bigg|\int_0^\infty s^3f_h^{(3)}(\eta)\,\mathrm{d}F(s)\bigg|\leq \frac{\lambda\delta^3\|f_h^{(3)}\|_\infty}{6}\int_0^\infty s^3\,\mathrm{d}F(s)\leq \frac{\lambda\delta^3}{3}\|h'\|_\infty\mathbb{E}[S^3],
\end{align*}
and we used (\ref{fbound1}) to obtain the final inequality.  In obtaining (\ref{geneq5}) we used that
\[1-\lambda\mathbb{E}[S]=1-\rho=\frac{\rho\delta\mathbb{E}[S^2]}{2\mathbb{E}[S]}=\frac{1}{2}\lambda\delta^2\mathbb{E}[S^2].\]
Multiplying by the constant $\frac{2}{\lambda\delta^2\mathbb{E}[S^2]}$ now gives
\begin{align*}\frac{2}{\lambda\delta^2\mathbb{E}[S^2]}G_{\widetilde{W}_t}f_h(x)=f_h''(x)-f_h'(x)+\frac{2}{\lambda\delta\mathbb{E}[S^2]}R,
\end{align*}
which we recognise as the generator $G_Zf(x)$ with an additional error term. From (\ref{compgen}) and setting $a=\frac{2}{\lambda\delta^2\mathbb{E}[S^2]}$,  we have that
\begin{align*}|\mathbb{E}[h(\widetilde{W})]-\mathbb{E}[h(Z)]|&= \bigg|\frac{2}{\lambda\delta^2\mathbb{E}[S^2]}\mathbb{E}[G_{\widetilde{W}_t}f_h(\widetilde{W})]-\mathbb{E}[G_Zf_h(\widetilde{W})]\bigg|\\
&\leq \frac{2}{\lambda\delta^2\mathbb{E}[S^2]}\cdot\frac{\lambda\delta^3}{3}\|h'\|_\infty\mathbb{E}[S^3] =\|h'\|_\infty\frac{4\mathbb{E}[S]\mathbb{E}[S^3]}{3(\mathbb{E}[S^2])^2}\frac{1-\rho}{\rho},
\end{align*}
whence on setting $\|h'\|_\infty=1$ yields the Wasserstein distance bound (\ref{mg1bound}), as required. \hfill $\Box$

\subsection*{Acknowledgements}
RG is supported by a Dame Kathleen Ollerenshaw Research Fellowship, and acknowledges support from the grant COST-STSM-CA15109-34568.   

\footnotesize

\end{document}